\begin{document}

\newtheorem{theorem}{Theorem}
\newtheorem{lemma}[theorem]{Lemma}
\newtheorem{claim}[theorem]{Claim}
\newtheorem{cor}[theorem]{Corollary}
\newtheorem{prop}[theorem]{Proposition}
\newtheorem{definition}{Definition}
\newtheorem{question}[theorem]{Open Question}

\def\cA{{\mathcal A}}
\def\cB{{\mathcal B}}
\def\cC{{\mathcal C}}
\def\cD{{\mathcal D}}
\def\cE{{\mathcal E}}
\def\cF{{\mathcal F}}
\def\cG{{\mathcal G}}
\def\cH{{\mathcal H}}
\def\cI{{\mathcal I}}
\def\cJ{{\mathcal J}}
\def\cK{{\mathcal K}}
\def\cL{{\mathcal L}}
\def\cM{{\mathcal M}}
\def\cN{{\mathcal N}}
\def\cO{{\mathcal O}}
\def\cP{{\mathcal P}}
\def\cQ{{\mathcal Q}}
\def\cR{{\mathcal R}}
\def\cS{{\mathcal S}}
\def\cT{{\mathcal T}}
\def\cU{{\mathcal U}}
\def\cV{{\mathcal V}}
\def\cW{{\mathcal W}}
\def\cX{{\mathcal X}}
\def\cY{{\mathcal Y}}
\def\cZ{{\mathcal Z}}

\def\A{{\mathbb A}}
\def\B{{\mathbb B}}
\def\C{{\mathbb C}}
\def\D{{\mathbb D}}
\def\E{{\mathbb E}}
\def\F{{\mathbb F}}
\def\G{{\mathbb G}}
\def\I{{\mathbb I}}
\def\J{{\mathbb J}}
\def\K{{\mathbb K}}
\def\L{{\mathbb L}}
\def\M{{\mathbb M}}
\def\N{{\mathbb N}}
\def\O{{\mathbb O}}
\def\P{{\mathbb P}}
\def\Q{{\mathbb Q}}
\def\R{{\mathbb R}}
\def\S{{\mathbb S}}
\def\T{{\mathbb T}}
\def\U{{\mathbb U}}
\def\V{{\mathbb V}}
\def\W{{\mathbb W}}
\def\X{{\mathbb X}}
\def\Y{{\mathbb Y}}
\def\Z{{\mathbb Z}}

\def\ep{{\mathbf{e}}_p}

\def\scr{\scriptstyle}
\def\\{\cr}
\def\({\left(}
\def\){\right)}
\def\[{\left[}
\def\]{\right]}
\def\<{\langle}
\def\>{\rangle}
\def\fl#1{\left\lfloor#1\right\rfloor}
\def\rf#1{\left\lceil#1\right\rceil}
\def\le{\leqslant}
\def\ge{\geqslant}
\def\eps{\varepsilon}
\def\mand{\qquad\mbox{and}\qquad}

\def\vec#1{\mathbf{#1}}
\def\inv#1{\overline{#1}}
\def\vol#1{\mathrm{vol}\,{#1}}

\newcommand{\comm}[1]{\marginpar{%
\vskip-\baselineskip 
\raggedright\footnotesize
\itshape\hrule\smallskip#1\par\smallskip\hrule}}

\def\xxx{\vskip5pt\hrule\vskip5pt}


\title{\bf Visible Points on Curves over Finite Fields}

\author{
{\sc Igor E. Shparlinski} \\
{Department of Computing, Macquarie University} \\
{Sydney, NSW 2109, Australia} \\
{igor@ics.mq.edu.au} \\
{\sc Jos{\'e} Felipe Voloch} \\
{Department of Mathematics,
University of Texas}\\
{Austin TX 78712 USA} \\
{voloch@math.utexas.edu}
}

\date{\today}
\pagenumbering{arabic}

\maketitle

\begin{abstract} For a prime $p$ and an absolutely irreducible modulo $p$
polynomial $f(U,V) \in \Z[U,V]$ we obtain an asymptotic
formulas  for the number of solutions
to the congruence $f(x,y) \equiv a \pmod p$ in positive integers
$x \le X$, $y \le Y$, with the additional condition
$\gcd(x,y)=1$. Such solutions have a natural interpretation
as solutions  which are visible from the origin.
 These formulas are 
derived on average over $a$ for a fixed prime $p$, and also 
on average over $p$ for a fixed integer  $a$.
\end{abstract}


\section{Introduction}
\label{sec:intro}

Let  $p$ be a prime and let $f(U,V) \in \Z[U,V]$
be a bivariate
polynomial with integer coefficients.

For real  $X$ and $Y$ with
$1 \le X,Y \le p$ and an integer $a$ we consider the set
$$
\cF_{p,a}(X,Y)
=   \{(x,y) \in  [ 1,  X]\times [1,Y] \ :\  f(x,y) \equiv a \pmod p\}
$$
which the set of points on level curves of $f(U,V)$
modulo $p$.

If the polynomial $f(x,y) -a$ is nonconstant absolutely irreducible
polynomial modulo $p$ of degree bigger than one can easily derive from
the Bombieri bound~\cite{Bomb} that
\begin{equation}
\label{eq:Bomb Bound}
\# \cF_{p,a}(X,Y)  = \frac{XY}{p} + O\(p^{1/2} (\log p)^2\),
\end{equation}
where the implied constant depends only on $\deg f$, see,
for example,~\cite{CoZa,GrShZa,VaZa,Zhe}.

In this paper we consider an apparently new question of studying  the
set
$$N_{p,a}(X,Y)
=   \{(x,y) \in\cF_{p,a}(X,Y)\ : \ \gcd(x,y)=1\}.
$$
These points have a natural geometric interpretation as points
on $\cF_{p,a}(X,Y)$ which are ``visible'' from the origin,
see~\cite{BCZ,HuNo,Now,Zhai}
and references therein for several other aspects of distribution of
visible points in various regions.

We show that on average over $a=0, \ldots, p-1$,
the cardinality  $N_{p,a}(X,Y)$  is close to its
expected value $6XY/\pi^2p$, whenever
\begin{equation}
\label{eq:Nontriv}
XY \ge p^{3/2 +\varepsilon}
\end{equation}
for any fixed $\varepsilon> 0$ and sufficiently large $p$.

We then consider the dual situation, when $a$ is fixed (in
particular we take $a=0$) but $p$ varies through all
primes up to $T$.

We recall  $A \ll B$ and  $A = O(B)$
both mean that $|A| \le c B$ holds with some
constant $c> 0$, which may depend on some
specified set of parameters.

\section{Absolute Irreducibility of Level Curves}

We start with the following statement which could
be of independent interest.

\begin{lemma}
\label{lem:Irred} If $F(U,V) \in \K[U,V]$ is absolutely irreducible
of degree $n$ over a field $\K$, then $F(U,V)-a$
is absolutely irreducible for all but
at most  $C(n)$
elements  $a \in \K$, where $C(n)$ depends only on $n$.
\end{lemma}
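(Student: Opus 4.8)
The plan is to reformulate the statement as one about a pencil of plane curves and then to bound the exceptional set by the degree of the locus of reducible forms. First I would reduce to the case $\K=\bar\K$: absolute irreducibility of $F-a$ is tested over the algebraic closure and every $a\in\K$ lies in $\bar\K$, so it suffices to bound the number of $a\in\bar\K$ for which $F-a$ fails to be irreducible over $\bar\K$; such a bound a fortiori bounds the exceptional $a\in\K$. Homogenizing, write $F^{\mathrm h}(U,V,Z)$ for the degree-$n$ homogenization of $F$; since $\deg(F-a)=n$ for every constant $a$, the form $F^{\mathrm h}-aZ^n$ is exactly the homogenization of $F-a$, and $F-a$ is irreducible over $\bar\K$ if and only if $F^{\mathrm h}-aZ^n$ is an irreducible ternary form. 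I am thus reduced to counting those $a$ for which $F^{\mathrm h}-aZ^n$ is reducible.

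Next I would pass to the projective space $\P^N$, $N=\binom{n+2}{2}-1$, of nonzero ternary forms of degree $n$ taken up to scalar. Because $F^{\mathrm h}$ and $Z^n$ are linearly independent (as $F$ is nonconstant), the assignment $a\mapsto[F^{\mathrm h}-aZ^n]$, together with the point $[Z^n]$ corresponding to $a=\infty$, traces out a line $\cL\cong\P^1\subset\P^N$. Let $\cR_n\subset\P^N$ denote the reducibility locus, the set of classes of forms that factor nontrivially. Writing $\cR_n=\bigcup_{d=1}^{n-1}\mathrm{Im}(\mu_d)$, where $\mu_d$ is the multiplication morphism from (forms of degree $d$) $\times$ (forms of degree $n-d$) to (forms of degree $n$), and using that the image of a morphism of projective varieties is Zariski closed, one sees that $\cR_n$ is a closed subvariety of $\P^N$. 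The crucial point, and the only place the hypothesis is used, is that $\cL\not\subseteq\cR_n$: indeed $F=F-0$ is absolutely irreducible, so $[F^{\mathrm h}]\in\cL$ lies off $\cR_n$. Hence $\cL\cap\cR_n$ is a proper Zariski-closed subset of the irreducible curve $\cL$, so it is finite, and the exceptional values of $a$ are among its points.

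It then remains to bound $\#(\cL\cap\cR_n)$ by a quantity $C(n)$ depending only on $n$, and here two points need care. First, one must invoke the general Bézout-type bound that a line meeting a projective variety $X$ in a finite set meets it in at most $\deg X$ points; this is needed because for $n\ge 3$ the locus $\cR_n$ has codimension larger than one and is not a hypersurface, so the elementary bound for a line against a hypersurface does not apply directly. Each component $\overline{\mathrm{Im}(\mu_d)}$ is the image of a product of projective spaces of dimension and degree determined by $n$, so $\deg\cR_n$ is controlled purely in terms of $n$, and I would set $C(n)=\deg\cR_n$. The main obstacle is making this bound uniform in $\K$, in particular independent of $\mathrm{char}\,\K$; I would address this by noting that $\cR_n$ is cut out by the characteristic-free elimination ideals defining the images of the $\mu_d$, whose degrees are governed by the Hilbert polynomials of the source spaces and are therefore bounded independently of the characteristic. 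In short, the qualitative finiteness is the easy Bertini-type content, while the quantitative, characteristic-uniform degree bound is where the real work lies.
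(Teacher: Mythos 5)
Your proposal is correct and follows essentially the same route as the paper's proof: parametrize degree-$n$ forms by a projective space, observe that the reducible locus is Zariski closed as a union of images of multiplication maps, note that the line $a\mapsto F-a$ is not contained in it because $F$ itself is absolutely irreducible, and bound the intersection by B\'ezout with $C(n)$ the degree of that locus. Your explicit homogenization step and your remarks on the non-hypersurface case of B\'ezout and on characteristic-independence are careful refinements of details the paper leaves implicit, but they do not change the argument.
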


\begin{proof}
The set of polynomials of degree $n$ is parametrized by a
projective space $\P^{s(n)}$ of dimension $s(n) = (n+1)(n+2)/2$
over $\K$,
   coordinatized by the coefficients. The subset
$X$ of $\P^{k(n)}$ consisting of reducible polynomials is a Zariski closed
subset because it is the union of
the images of the maps
$$\P^{s(k)}\times \P^{s(n-k)} \to \P^{s(n)}, \qquad  k \le  n/2,$$
given by multiplying a polynomial of degree $k$ with a polynomial of
degree $n-k$.
The map $t \mapsto F(U,V)-t$ describes a line in $\P^{s(n)}$ and
by the assumption of absolutely irreducibility of $F$,
    this line is not contained in $X$.
So, by the B\'ezout theorem, it meets $X$ in at most
$C(n)$ points, where $C(n)$ is the degree of $X$.
Hence for all but at most $C(n)$ values of $a$, $F(U,V)-a$ is
absolutely irreducible.
\end{proof}

\section{Visible Points on Almost All Level Curves}

Throughout this section, the implied constants in
    the notations $A \ll B$ and  $A = O(B)$ may depend on
the degree $n = \deg f$.

\begin{theorem}
\label{thm:Visible} Let $f$ be a polynomial with integer coefficients
which is absolutely irreducible and of degree bigger than one modulo 
the prime $p$. Then
for real  $X$ and $Y$ with
$1 \le  X, Y \le  p$
we have
$$
\sum_{a=0}^{p-1} \left|N_{p,a}(X,Y)- \frac{6}{\pi^2} \cdot \frac{XY}{p} \right|
\ll  X^{1/2} Y^{1/2} p^{3/4}  \log p.
$$
\end{theorem}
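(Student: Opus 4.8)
The plan is to use Möbius inversion to handle the coprimality condition, then apply the Bombieri-type bound to each resulting counting problem, with the crucial saving coming from averaging over $a$.

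The plan is to strip the coprimality condition by Möbius inversion, apply the Bombieri estimate \eqref{eq:Bomb Bound} to the rescaled level curves for small values of the Möbius modulus $d$, and control the remaining range trivially after summation over $a$. Writing $M=\frac{6}{\pi^2}\cdot\frac{XY}{p}$ and using that $\sum_{d\mid\gcd(x,y)}\mu(d)=1$ precisely when $\gcd(x,y)=1$, the substitution $x=du$, $y=dv$ gives
\begin{equation*}
N_{p,a}(X,Y)=\sum_{d\le\min(X,Y)}\mu(d)\,\#\cF^{(d)}_{p,a},\qquad \#\cF^{(d)}_{p,a}=\#\{(u,v):u\le X/d,\ v\le Y/d,\ f(du,dv)\equiv a\}.
\end{equation*}
For every $d$ coprime to $p$ the map $(U,V)\mapsto(dU,dV)$ is an automorphism of $\A^2$ over $\F_p$, so $f(dU,dV)-a$ is absolutely irreducible of degree $n$ exactly when $f(U,V)-a$ is; by Lemma~\ref{lem:Irred} this fails for at most $C(n)$ exceptional values of $a$. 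For every non-exceptional $a$ and every such $d$, applying \eqref{eq:Bomb Bound} to the rescaled polynomial on the box $[1,X/d]\times[1,Y/d]$ gives $\#\cF^{(d)}_{p,a}=XY/(d^2p)+O(p^{1/2}(\log p)^2)$.

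Next I would fix a truncation level $D_0$ and split the sum over $d$ at $D_0$. In the range $d\le D_0$, for non-exceptional $a$, summing the Bombieri estimates and using $\sum_{d\le D_0}\mu(d)/d^2=6/\pi^2+O(1/D_0)$ recovers the main term $M$ with a per-$a$ error $O(XY/(pD_0)+D_0p^{1/2}(\log p)^2)$; summed over the at most $p$ non-exceptional $a$ this contributes $O(XY/D_0+D_0p^{3/2}(\log p)^2)$. The tail $d>D_0$ I would not estimate pointwise---there the Bombieri error already swamps the main term---but rather bound on average: by the triangle inequality and the elementary identity $\sum_{a=0}^{p-1}\#\cF^{(d)}_{p,a}=\fl{X/d}\fl{Y/d}\le XY/d^2$, which merely counts lattice points in the box and is insensitive to the arithmetic of $f$, the whole tail contributes at most $\sum_{d>D_0}XY/d^2\ll XY/D_0$ to $\sum_a|N_{p,a}-M|$.

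Choosing $D_0=(XY)^{1/2}p^{-3/4}(\log p)^{-1}$ balances $XY/D_0$ against $D_0p^{3/2}(\log p)^2$, and both equal $X^{1/2}Y^{1/2}p^{3/4}\log p$, the asserted bound; when this choice gives $D_0<1$, i.e.\ below the range \eqref{eq:Nontriv}, the crude estimate $\sum_a|N_{p,a}-M|\le\sum_aN_{p,a}+\sum_aM\ll XY\le X^{1/2}Y^{1/2}p^{3/4}\log p$ already suffices. It remains to dispose of the $O(1)$ exceptional $a$, for which \eqref{eq:Bomb Bound} is unavailable: there $f-a$ splits over $\F_p$, and I would count its $\F_p$-points componentwise---absolutely irreducible components of degree $\ge2$ give $O(XY/p+p^{1/2}(\log p)^2)$ by \eqref{eq:Bomb Bound}, lines over $\F_p$ give $O(XY/p+\max(X,Y))$ by a linear congruence count, and geometrically reducible components give $O(1)$ by B\'ezout. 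Since $\max(X,Y)\le X^{1/2}Y^{1/2}p^{1/2}$, each exceptional $a$ lies within the target, and there are only $O(1)$ of them.

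The step I expect to be the main obstacle is not any single estimate but the balancing itself, together with the realization that the large-$d$ range---where no cancellation is available pointwise---becomes harmless once summed over $a$, since there the count collapses to the number of lattice points in a box. Once the uniform absolute irreducibility of $f(dU,dV)-a$ across all $d$ coprime to $p$ is secured via Lemma~\ref{lem:Irred}, the remaining computations are routine.
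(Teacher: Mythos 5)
Your proposal is correct and follows essentially the same route as the paper: Möbius inversion over $d=\gcd(x,y)$, the Bombieri bound applied to the rescaled absolutely irreducible polynomials $f(dU,dV)-a$ for $d\le D$ and $a$ outside the exceptional set of Lemma~\ref{lem:Irred}, the tail $d>D$ bounded after summing over $a$ by the lattice-point identity $\sum_a M_{p,a}(d;X,Y)=\fl{X/d}\fl{Y/d}$, and the same choice $D=X^{1/2}Y^{1/2}p^{-3/4}(\log p)^{-1}$. The only (harmless) deviations are that you handle the $O(1)$ exceptional $a$ by a componentwise count where the paper simply uses the trivial bound $N_{p,a}(X,Y)\le\min\{X,Y\}\deg f\ll\sqrt{XY}$, and that you explicitly dispose of the degenerate case $D<1$.
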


\begin{proof}
Let $\cA_p$ consist of $a \in \{0,\ldots, p-1\}$ for which
$f(U,V)-a$ is absolutely irreducible modulo $p$.

    For an integer $d$,  we define
$$
     M_{p,a}(d;X,Y) = \# \{(x,y) \in \cF_{p,a}(X,Y) \ | \ \gcd(x,y)
\equiv 0 \pmod d\}.
$$

Let  $\mu(d)$ denote the M\"obius function.
    We recall that $\mu(1) = 1$, $\mu(d) = 0$ if $d \ge 2$ is not
square-free  and $\mu(d) = (-1)^{\omega(d)}$ otherwise, where $\omega(d)$
is the number of distinct prime divisors $d$.
By the inclusion-exclusion principle, we write
\begin{equation}
\label{eq:N and Md}
N_{p,a}(X,Y) = \sum_{d=1}^\infty \mu(d)  M_{p,a}(d;X,Y) .
\end{equation}

Writing
$$
x = ds\mand y = dt,
$$
we have
$$
\#  M_{p,a}(d;X,Y)   =   \# \{(s,t) \in [1,  X/d]\times [1, Y/d]\ | \
f(ds, dt)  \equiv a
\pmod p\}.
$$

Thus $M_{p,a}(d;X,Y)$ is the number of points on a curve
in a given box.
If $a \in \cA_p$ and $1\le d < p$ then
$f(dU, dV) - a$ remains absolutely irreducible modulo $p$.
Accordingly, we have an analogue
of~\eqref{eq:Bomb Bound} which asserts
that
\begin{equation}
\label{eq:Small d}
    M_{p,a}(d;X,Y) = \frac{XY}{d^2 p} + O\(p^{1/2} (\log p)^2\).
\end{equation}
We fix some  positive parameter $D<p$ and substitute the
bound~\eqref{eq:Small d}
in~\eqref{eq:N and Md}
for $d\le D$, getting
\begin{eqnarray*}
\lefteqn{N_{p,a}(X,Y) }\\
& & \qquad =
\sum_{d \le D} \(\frac{\mu(d)XY}{d^2 p} + O\(p^{1/2} (\log p)^2\)\)
+ O\(\sum_{d > D} M_{p,a}(d;X,Y)\)\\
& & \qquad =
\frac{XY}{p} \sum_{d \le D} \frac{\mu(d)}{d^2} + O\(Dp^{1/2} (\log
p)^2+\sum_{d > D}
M_{p,a}(d;X,Y)\)
\end{eqnarray*}
for every $a\in \cA_p$.

Furthermore
$$
\sum_{d \le D} \frac{\mu(d)}{d^2} = \sum_{d =1}^\infty
\frac{\mu(d)}{d^2} + O(D^{-1})
= \prod_{\ell} \(1 - \frac{1}{\ell^2}\) + O(D^{-1}),
$$
where the product is taken over all prime numbers $\ell$.
Recalling that
$$
\prod_{\ell} \(1 - \frac{1}{\ell^2}\)  = \zeta(2)^{-1} = \frac{6}{\pi^2},
$$
see~\cite[Equation~(17.2.2) and Theorem~280]{HaWr}, we
obtain
\begin{equation}
\label{eq:N prelim}
    \left|N_{p,a}(X,Y)- \frac{6}{\pi^2} \cdot \frac{XY}{p} \right|
\ll XY/Dp + Dp^{1/2} (\log p)^2+\sum_{d > D}
M_{p,a}(d;X,Y),
\end{equation}
for every $a\in \cA_p$.

    We also remark that
\begin{equation}
\begin{split}
\label{eq:Large d}
    \sum_{a=0}^{p-1} \sum_{d > D} M_{p,a}(d;X,Y)  &=
\sum_{d >D}\sum_{a=0}^{p-1}  M_{p,a}(d;X,Y)
\\
&
= \sum_{d > D} \fl{\frac{X}{d}} \fl{\frac{Y}{d}}
\le XY \sum_{d > D} \frac{1}{d^2} \ll XY/D.
\end{split}
\end{equation}
Therefore, using
the bounds~\eqref{eq:N prelim} and~\eqref{eq:Large d},
we obtain
\begin{equation}
\label{eq:good a}
\sum_{a \in \cA_p} \left|N_{p,a}(X,Y)- \frac{6}{\pi^2} \cdot
\frac{XY}{p} \right|
\ll XY/D + Dp^{3/2} (\log p)^2.
\end{equation}
For $a\not\in \cA_p$ we estimate $N_{p,a}(X,Y)$ trivially
as
$$
N_{p,a}(X,Y) \le \min\{X, Y\} \deg f \ll \sqrt{XY}.
$$
Thus by  Lemma~\ref{lem:Irred},
\begin{equation}
\label{eq:bad a}
\sum_{a \not \in \cA_p} \left|N_{p,a}(X,Y)- \frac{6}{\pi^2} \cdot
\frac{XY}{p} \right|
\ll \max\{\sqrt{XY}, XY/p\} \ll \sqrt{XY}.
\end{equation}
Combining~\eqref{eq:good a} and~\eqref{eq:bad a} and
taking $D = X^{1/2} Y^{1/2} p^{-3/4}  (\log p)^{-1}$
we conclude the proof.
\end{proof}

\begin{cor}
\label{cor:almost all a}  Let $f$ be a polynomial with integer 
coefficients which is absolutely irreducible and of degree bigger than one.
 If  $XY \ge p^{3/2} (\log
p)^{2+\varepsilon}$ for some
fixed $\varepsilon > 0$, then
$$
N_{p,a}(X,Y)= \(\frac{6}{\pi^2} + o(1)\) \frac{XY}{p}
$$
for all but $o(p)$ values of $a=0, \ldots, p-1$.
\end{cor}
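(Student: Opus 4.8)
The plan is to derive the corollary from Theorem~\ref{thm:Visible} by a standard first-moment (Markov-type) argument, so the work is almost entirely bookkeeping. As a preliminary point, since $f$ is absolutely irreducible of degree bigger than one over $\Q$, its reduction modulo $p$ remains absolutely irreducible and of the same degree for all sufficiently large $p$; hence the hypotheses of Theorem~\ref{thm:Visible} are satisfied once $p$ is large, and the first-moment bound
$$
\sum_{a=0}^{p-1} \left|N_{p,a}(X,Y) - \frac{6}{\pi^2}\cdot\frac{XY}{p}\right| \ll X^{1/2}Y^{1/2}p^{3/4}\log p
$$
is available to us.

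I would introduce a threshold $\delta = \delta(p)$ with $\delta \to 0$ as $p \to \infty$, to be pinned down at the end, and declare $a \in \{0,\dots,p-1\}$ to be \emph{bad} if
$$
\left|N_{p,a}(X,Y) - \frac{6}{\pi^2}\cdot\frac{XY}{p}\right| \ge \delta\cdot\frac{XY}{p}.
$$
Writing $B$ for the number of bad $a$, each bad value contributes at least $\delta\,XY/p$ to the sum in Theorem~\ref{thm:Visible}, so
$$
B\cdot\delta\cdot\frac{XY}{p} \ll X^{1/2}Y^{1/2}p^{3/4}\log p, \qquad\text{hence}\qquad B \ll \frac{p^{7/4}\log p}{\delta\,X^{1/2}Y^{1/2}}.
$$

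To finish I would substitute the hypothesis $XY \ge p^{3/2}(\log p)^{2+\varepsilon}$, which yields $X^{1/2}Y^{1/2} \ge p^{3/4}(\log p)^{1+\varepsilon/2}$ and therefore
$$
\frac{B}{p} \ll \frac{1}{\delta\,(\log p)^{\varepsilon/2}}.
$$
Taking $\delta = (\log p)^{-\varepsilon/4}$ makes $\delta \to 0$ while $B/p \ll (\log p)^{-\varepsilon/4} = o(1)$, so only $o(p)$ values of $a$ are bad. For each good $a$ one has $|N_{p,a}(X,Y) - (6/\pi^2)(XY/p)| < \delta\,(XY/p) = o(XY/p)$, which is precisely the asserted asymptotic $N_{p,a}(X,Y) = (6/\pi^2 + o(1))XY/p$.

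I do not anticipate any genuine obstacle here, as the estimate is a direct consequence of Markov's inequality applied to the first moment. The one point that requires a little care is the joint choice of $\delta$: it must tend to zero and yet be large enough to force $B = o(p)$. This is possible precisely because there is a factor $(\log p)^{\varepsilon/2}$ of room between the first-moment bound of Theorem~\ref{thm:Visible} and the hypothesis of the corollary, and any $\delta = (\log p)^{-c}$ with $0 < c < \varepsilon/2$ would do.
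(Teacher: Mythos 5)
Your proof is correct and is exactly the standard first-moment (Markov) deduction that the paper intends, since it states the corollary as an immediate consequence of Theorem~\ref{thm:Visible} without further argument. Your preliminary remark that the reduction of $f$ modulo $p$ stays absolutely irreducible for all large $p$, and your choice $\delta=(\log p)^{-\varepsilon/4}$ exploiting the $(\log p)^{\varepsilon/2}$ of slack, are both sound.
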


\section{Visible Points on Almost All Reductions}

Throughout this section, the implied constants in
    the notations $A \ll B$ and  $A = O(B)$ may depend on
the coefficients of $f$.

To simplify notation we put
$$
\cF_{p}(X,Y) =   \cF_{p,0}(X,Y)\mand
N_{p}(X,Y)
=  N_{p,0}(X,Y).
$$

\begin{theorem}
\label{thm:Visible_overp} Let $f$ be a polynomial with integer 
coefficients which is absolutely irreducible and of degree bigger than one.
Then for real  $T$, $X$ and $Y$ such that $T \ge 2\max(X,Y)$,
we have
$$
\sum_{T/2 \le p\le T} \left|N_{p}(X,Y)- \frac{6}{\pi^2} \cdot
\frac{XY}{p} \right|
\ll   X^{1/2} Y^{1/2} T^{3/4+o(1)},
$$
where the sum is taken over all primes $p$ with $T/2 \le p\le T$.
\end{theorem}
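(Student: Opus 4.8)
The plan is to mirror the structure of the proof of Theorem~\ref{thm:Visible}, but now summing over primes $p$ in the dyadic range $[T/2,T]$ instead of over residues $a$. As before, I would start from the inclusion–exclusion identity~\eqref{eq:N and Md} with $a=0$, writing $N_p(X,Y) = \sum_{d\ge 1}\mu(d) M_p(d;X,Y)$, where $M_p(d;X,Y)$ counts pairs $(s,t)\in[1,X/d]\times[1,Y/d]$ with $f(ds,dt)\equiv 0\pmod p$. The key point is that the substitution $(U,V)\mapsto(dU,dV)$ preserves absolute irreducibility of $f$ modulo $p$ whenever $p\nmid d$, so for all but finitely many primes $p$ (those dividing the finitely many ``bad'' leading data of $f$, an exception absorbed into the implied constant) and all $d<p$, the Bombieri-type estimate~\eqref{eq:Bomb Bound} gives $M_p(d;X,Y) = XY/(d^2 p) + O(p^{1/2}(\log p)^2)$, exactly as in~\eqref{eq:Small d}.

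Next I would fix a cutoff $D<T/2$ and split the sum over $d$ at $D$. Substituting the main-term estimate for $d\le D$ and using $\sum_{d\le D}\mu(d)/d^2 = 6/\pi^2 + O(D^{-1})$, I obtain for each good prime $p$ the pointwise bound
$$
\left|N_p(X,Y)-\frac{6}{\pi^2}\cdot\frac{XY}{p}\right|
\ll \frac{XY}{Dp} + Dp^{1/2}(\log p)^2 + \sum_{d>D} M_p(d;X,Y),
$$
the analogue of~\eqref{eq:N prelim}. Summing over the primes $p\in[T/2,T]$, the first term contributes $O(XY/D)$ (since $1/p\ll 1/T$ and there are $O(T/\log T)$ primes), and the second contributes $O(D\,\pi(T)\,T^{1/2}(\log T)^2) \ll D T^{3/2+o(1)}$, playing the role that $Dp^{3/2}(\log p)^2$ played in~\eqref{eq:good a}.

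The main obstacle, and the one genuinely new ingredient, is the tail $\sum_{T/2\le p\le T}\sum_{d>D} M_p(d;X,Y)$, which must be handled \emph{without} the clean averaging over $a$ that produced the exact identity $\sum_a M_{p,a}(d;X,Y)=\fl{X/d}\fl{Y/d}$ in~\eqref{eq:Large d}. Here the plan is to swap the order of summation and, for each fixed $d>D$ and each pair $(s,t)\in[1,X/d]\times[1,Y/d]$, count the primes $p\in[T/2,T]$ dividing the fixed nonzero integer $f(ds,dt)$. Since $T\ge 2\max(X,Y)$ guarantees $|f(ds,dt)| = T^{O(1)}$, each such value has $O(\log T)$ prime factors in the range, so
$$
\sum_{T/2\le p\le T}\sum_{d>D} M_p(d;X,Y)
\ll (\log T)\sum_{d>D}\fl{X/d}\fl{Y/d}
\ll XY(\log T)/D,
$$
provided $f(ds,dt)\neq 0$; the finitely many $(s,t)$ with $f(ds,dt)=0$ lie on a curve and contribute only $O(\sqrt{XY}\,\log T)$, negligible for our range. (One also discards the $O(1)$ primes dividing the content-type data of $f$, treating those terms trivially via $N_p(X,Y)\ll\sqrt{XY}$ as in~\eqref{eq:bad a}.) Collecting the three contributions gives a total bound $\ll XY(\log T)/D + D\,T^{3/2+o(1)}$, and optimizing by taking $D = X^{1/2}Y^{1/2}T^{-3/4+o(1)}$ balances the two terms and yields $X^{1/2}Y^{1/2}T^{3/4+o(1)}$, completing the proof.
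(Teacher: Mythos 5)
Your overall strategy coincides with the paper's: inclusion--exclusion over $d$, the Bombieri-type estimate~\eqref{eq:Small d} for $d\le D$ (legitimate, since $T$ may be taken large enough that $f$ stays absolutely irreducible modulo every prime in $[T/2,T]$), and, for the tail $d>D$, an interchange of summation followed by counting the primes $p\in[T/2,T]$ dividing the integer $f(ds,dt)$. The main terms, the $Dp^{1/2}(\log p)^2$ error summed over $p$, and the contribution of the triples with $f(ds,dt)\ne 0$ are all handled essentially as in~\eqref{eq:Large d'} and the surrounding argument.

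The genuine problem is your disposal of the triples with $f(ds,dt)=0$. For such a triple \emph{every} prime $p\in[T/2,T]$ satisfies $p\mid f(ds,dt)$, so the inner sum over $p$ is $\pi(T)-\pi(T/2)\asymp T/\log T$, not $O(\log T)$; the divisibility count by $\omega$ only applies to nonzero values. The paper's accounting of this exceptional set is: the set $\cZ$ of integer zeros of $f$ in the box has cardinality $\ll \min(X,Y)\le\sqrt{XY}$ (by specializing one variable and using that $f$ is absolutely irreducible, so each specialization is a nonzero polynomial), each $(u,v)\in\cZ$ arises from at most $\tau(u)=X^{o(1)}$ triples $(d,s,t)$, and the prime sum for each is bounded trivially by $T$. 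This yields a term $(XY)^{1/2+o(1)}T$, which the paper carries explicitly into its final display --- not the $O(\sqrt{XY}\,\log T)$ you claim, which understates the contribution by a factor of order $T$. This is not a cosmetic slip: the corrected term is the largest one in the final optimization (it is not visibly dominated by $X^{1/2}Y^{1/2}T^{3/4+o(1)}$, which is precisely why the paper singles out $\cZ$ and remarks that under extra hypotheses Siegel's theorem would reduce $\#\cZ$ to $O(1)$). As written, your bound on the tail sum is therefore incorrect, and the choice of $D$ alone does not close the proof.
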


\begin{proof} It is enough to consider $T$ large enough so that
$f$ remains absolutely irreducible and of degree bigger than one
for all $p, T/2 \le p\le T$.
As before we have
\begin{equation}
\label{eq:N prelim'}
    \left|N_{p}(X,Y)- \frac{6}{\pi^2} \cdot \frac{XY}{p} \right|
\ll XY/Dp + Dp^{1/2} (\log p)^2+\sum_{d > D} M_{p}(d;X,Y).
\end{equation}
where
$$
     M_{p}(d;X,Y) = \# \{(x,y) \in \cF_{p}(X,Y) \ | \ \gcd(x,y) \equiv
0 \pmod d\}  .
$$

    We also remark that
\begin{equation}
\begin{split}
\label{eq:Large d'}
    \sum_{T/2 \le p\le T} \sum_{d > D} M_{p}(d;X,Y)  &=
\sum_{d >D}\sum_{T \le p\le T} M_{p}(d;X,Y)
\\
&
= \sum_{d > D} \sum_{1 \le s \le X/d}\sum_{1 \le t \le Y/d}
\sum_{\substack{T/2 \le p\le T\\ p|f(ds,dy)}} 1.
\end{split}
\end{equation}

Let $\cZ$ be set of integer zeros of $f$ in the relevant box, that is
$$
\cZ = \{(u,v) \in \Z^2\ : 1 \le x \le X, 1 \le y \le Y,\ f(u,v) = 0\}.
$$
It is easy to see that $\# \cZ \ll \min(X,Y) \le \sqrt{XY}$.
Indeed, it is enough to notice that
since $f(U,V)$ is absolutely irreducible, each
specialization
$g_y(U) = f(U,y)$ with $y \in \Z$ and $h_x(V) = f(x,V)$ with $x \in \Z$ 
is a nonzero polynomials in $U$ and $V$, respectively.
(Under extra, but generic, hypotheses, one can invoke Siegel's theorem, 
which gives $\# \cZ = O(1)$ but this does not lead to an improvement in 
our final bound.)
Denoting by $\tau(k)$ the number of integer divisors of  a positive
integer $k$, we see that
for each $(u,v)\in \cZ$ there are at most $\tau(u) = X^{o(1)}$
(see~\cite[Theorem~317]{HaWr}) pairs  $(d,s)$ of positive
integers
with $ds=u$, after which there is at most one
value of $t$.  Thus for these triples
$(d,s,t)$,  we estimate the inner sum
over $p$ in~\eqref{eq:Large d'} trivially as $T$.

To estimate the rest of the sums, as before, we denote by
$\omega(k)$ the number of prime divisors of  a positive
integer $k$ and note that $\omega(k) \ll \log k$. Thus for $(u,v)\not \in \cZ$
we can estimate the inner sum
over $p$ in~\eqref{eq:Large d'} as $\omega(|f(ds,dy)|) =
  (XY)^{o(1)}$.
Therefore
\begin{eqnarray*}
\sum_{T/2 \le p\le T} \sum_{d > D} M_{p}(d;X,Y)
&\le & \sum_{d > D} \sum_{\substack{1 \le
s \le X/d\\1 \le t \le Y/d\\ (ds,dt)\in \cZ}}\,
\sum_{T/2 \le p\le T} 1
+
\sum_{d > D} \sum_{\substack{1 \le
s \le X/d\\1 \le t \le Y/d\\ (ds,dt)\not \in \cZ}}\,
\sum_{ p|f(ds,dy)} 1\\
& \le  & \# \cZ X^{o(1)} T
+ (XY)^{o(1)}
\sum_{d > D} \sum_{\substack{1 \le
s \le X/d\\1 \le t \le Y/d\\ (ds,dt)\not \in \cZ}} 1\\
& = &  (XY)^{1/2+o(1)} T
+ (XY)^{1+ o(1)} D^{-1}.
\end{eqnarray*}

We now put everything together getting
\begin{align*}
\sum_{T/2 \le p\le T}  & \left|N_{p}(X,Y)- \frac{6}{\pi^2} \cdot 
\frac{XY}{p} \right|
\\ &\ll XY/D\log T  + DT^{3/2} (\log T)^2+  T (XY)^{1/2+o(1)} + (XY)^{1+o(1)} D^{-1},
\end{align*}
and take  $D = X^{1/2} Y^{1/2} T^{-3/4}$
getting the result.
\end{proof}

\begin{cor}
\label{cor:almost all p}
 Let $f$ be a polynomial with integer 
coefficients which is absolutely irreducible and of degree 
bigger than one.
If  $XY \ge T^{3/2+\varepsilon}$ for some
fixed $\varepsilon > 0$, that
$$
N_{p}(X,Y)= \(\frac{6}{\pi^2} + o(1)\) \frac{XY}{p}
$$
for all but $o(T/\log T)$ primes $p \in [T/2,T]$.
\end{cor}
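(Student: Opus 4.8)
The plan is to deduce Corollary~\ref{cor:almost all p} from Theorem~\ref{thm:Visible_overp} by a straightforward averaging (Markov/Chebyshev) argument, exactly parallel to how Corollary~\ref{cor:almost all a} follows from Theorem~\ref{thm:Visible}. First I would fix $\varepsilon>0$ and suppose $XY\ge T^{3/2+\varepsilon}$. The main term $\frac{6}{\pi^2}\cdot\frac{XY}{p}$ has order $XY/T$ throughout the range $T/2\le p\le T$, and the desired conclusion is precisely the assertion that the deviation $\left|N_p(X,Y)-\frac{6}{\pi^2}\cdot\frac{XY}{p}\right|$ is $o(XY/p)$ for all but $o(T/\log T)$ of the primes in $[T/2,T]$.

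Next I would call a prime $p\in[T/2,T]$ \emph{bad} if this deviation fails to be $o(XY/p)$, say if it exceeds $\delta\, XY/T$ for a small fixed $\delta>0$ (the threshold will be let tend to $0$ slowly in the usual way). Each bad prime contributes at least $\delta\,XY/T$ to the sum estimated in Theorem~\ref{thm:Visible_overp}. Hence, writing $B$ for the number of bad primes, Theorem~\ref{thm:Visible_overp} gives
$$
B\cdot \delta\, \frac{XY}{T}\le \sum_{T/2\le p\le T}\left|N_p(X,Y)-\frac{6}{\pi^2}\cdot\frac{XY}{p}\right|\ll X^{1/2}Y^{1/2}T^{3/4+o(1)}.
$$
Solving for $B$ yields $B\ll \delta^{-1}\,T^{1+o(1)}(XY)^{-1/2}T^{3/4}$, that is $B\ll \delta^{-1}\,(XY)^{-1/2}T^{7/4+o(1)}$.

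It then remains to check that this bound is $o(T/\log T)$. Substituting the hypothesis $XY\ge T^{3/2+\varepsilon}$ gives $(XY)^{-1/2}\le T^{-3/4-\varepsilon/2}$, so
$$
B\ll \delta^{-1}\,T^{7/4+o(1)}\cdot T^{-3/4-\varepsilon/2}=\delta^{-1}\,T^{1-\varepsilon/2+o(1)},
$$
which is $o(T/\log T)$ since the saving of $T^{\varepsilon/2}$ comfortably beats the $\log T$ factor and the $T^{o(1)}$ loss, provided $\delta$ is held fixed (or sent to zero slowly enough that $\delta^{-1}=T^{o(1)}$). Thus at most $o(T/\log T)$ primes are bad, and for the remaining primes $N_p(X,Y)=\left(\frac{6}{\pi^2}+o(1)\right)\frac{XY}{p}$, as claimed.

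I do not expect a genuine obstacle here: this is the routine Markov-inequality passage from an $\ell^1$-average bound to a statement holding for almost all $p$, and the only points requiring a little care are bookkeeping ones — absorbing the $T^{o(1)}$ factor from Theorem~\ref{thm:Visible_overp} and the $\log T$ in the target count into the power saving $T^{\varepsilon/2}$, and formalizing the two ``$o(1)$'' quantifiers (one can make the threshold $\delta$ a slowly decreasing function of $T$ so that both the deviation bound and the exceptional-set bound hold simultaneously). All of this works because the exponent hypothesis $3/2+\varepsilon$ leaves a strictly positive power of $T$ to spare against the critical exponent $3/2$ coming from the $T^{3/4}$ in the theorem divided by the main-term size $XY/T$.
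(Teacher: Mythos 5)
Your argument is correct and is exactly the standard Markov-type deduction the paper intends: the corollary is stated without proof as an immediate consequence of Theorem~\ref{thm:Visible_overp}, and your bookkeeping (bad primes contribute at least $\delta XY/T$ each, so their number is $\ll \delta^{-1}(XY)^{-1/2}T^{7/4+o(1)} \ll \delta^{-1}T^{1-\varepsilon/2+o(1)} = o(T/\log T)$ for $\delta$ tending to zero slowly) is precisely the omitted computation. No issues.
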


\section{Remarks}

Certainly it would be interesting to obtain  an asymptotic formula
for $N_{p,a}(X,Y)$ which holds for every $a$. 
Even the case of $X = Y = p$ would be of interest.
We remark that for the polynomial  
$f(U,V) = UV$ such an asymptotic formula is give
in~\cite{Shp} and is nontrivial provided 
$XY \ge p^{3/2+\varepsilon}$ for some
fixed $\varepsilon > 0$. 
However the technique of~\cite{Shp} does not seem to
apply to more general polynomials. 

We remark that studying such special cases as 
visible points on the curves of the shape 
$f(U,V) = V - g(U)$ (corresponding to points a graph 
of a univariate polynomial) and 
$f(U,V) = V^2 - X^3 -rX - s$ (corresponding to points 
on an elliptic curve) is also of interest and may
be more accessible that the general case.

\section*{Acknowledgements.}  This work began
during a pleasant visit by I.~S.\ to University of Texas
sponsored by NSF grant DMS-05-03804; the
support and hospitality of this institution are gratefully
acknowledged. During the preparation of this paper, I.~S.\ was
supported in part by ARC grant DP0556431.

\end{document}